\documentclass{article}
\usepackage[utf8]{inputenc}
\usepackage{subfig}
\usepackage{graphicx} % Required for inserting images
\usepackage{caption,color}   % 24.62
\usepackage{amssymb}
\usepackage{amsthm}
\usepackage{amsmath}
\usepackage{epic}
\usepackage{setspace}
\usepackage{float}
\usepackage{natbib}
\usepackage{multirow}
\usepackage{hyperref}
\usepackage{xcolor}
\usepackage{marginnote}
\usepackage{amsthm}
\usepackage{authblk}

\newtheorem{thm}{Theorem}[section]

\newtheorem{defi}[thm]{Definition}

\title{\textbf{All eigenvalues of the blowup of a graph}}

\author[1]{\quad Ge Lin\thanks{ linge0717@126.com}}
\author[1]{Changjiang Bu\thanks{ Corresponding author. buchangjiang@hrbeu.edu.cn}}

\affil[1]{School of Mathematical Sciences, Harbin Engineering University, Harbin 150001, PR China }

\date{}

\begin{document}

\maketitle

\begin{abstract}

The $s$-blowup of a graph ($s\geq2$) is the $2s$-uniform hypergraph obtained by replacing each vertex with a set of size $s$ and preserving the adjacency relation.
In this paper,
we define $2s$-weighted graphs and use them to give all eigenvalues of the $s$-blowup of a graph.

\end{abstract}

\noindent\textbf{Keywords: }Hypergraph, Eigenvalue, Blowup of a graph, Weighted graph

\noindent\textbf{MSC Classification: }05C65, 05C50

\section{Introduction}\label{sec1}

The research on spectral hypergraph theory has received extensive attention \cite{bu2026on,clark2021harary,duan2023characteristic,gao2022spectral,lin2025largest,zhang2017spectra}.
There are several classes of uniform hypergraphs whose spectra have been determined.
In 2018,
Clark and Cooper \cite{clark2018hypertrees} obtained all eigenvalues of a uniform hypertree,
and in 2024,
Li et al. \cite{li2024hypertree} gave its characteristic polynomial.
Zheng gave all eigenvalues and the characteristic polynomial of the $3$-uniform complete hypergraph \cite{zheng2020simplifying,zheng2021complete}.
The power hypergraph is the edge expansion hypergraph of a graph.
In 2014,
Zhou et al. \cite{zhou2014some} showed that the eigenvalues of subgraphs of a graph give rise to eigenvalues of the power hypergraph.
In 2023,
Chen et al. further obtained all eigenvalues of the power hypergraph by using signed subgraphs of the graph \cite{chen2023all},
and gave an explicit expression for its characteristic polynomial in 2024 \cite{chen2024spectra}.
This paper considers the vertex expansion hypergraph of a graph,
called its blowup.

The $s$-blowup of a graph ($s\geq2$) is the $2s$-uniform hypergraph obtained by replacing each vertex with a set of size $s$ and preserving the adjacency relation.
In 2015,
Khan and Fan \cite{Fan2015spectral} proved that a graph and its blowup have the same spectral radius.
In 2016,
Khan et al. \cite{khan2016h} gave all $\mathrm{H}$-eigenvalues (i.e., real eigenvalues with real eigenvectors) of the blowup of a graph by using its induced subgraphs.
In 2019,
Fan et al. \cite{fan2019spectral} characterized the spectral symmetry of the blowup of a graph.

Weighted graphs have been widely studied in spectral graph theory.
A signed graph is a weighted graph whose edges are assigned either $1$ or $-1$.
Signed graphs have important applications in the analysis of social networks \cite{facchetti2011computing} and the construction of Ramanujan graphs \cite{marcus2015interlacing}.
An extension of signed graphs is complex unit gain graphs.
A number of spectral properties on complex unit gain graphs have been established \cite{cavaleri2021group,mehatari2022adjacency,reff2012spectral}.
There are some investigations on the specializations of complex unit gain graphs.
These include weighted directed graphs \cite{bapat2012weighted} and Hermitian graphs used to study universal state transfer \cite{cameron2014universal},
as well as the Hermitian adjacency matrix for digraphs and mixed graphs \cite{guo2017hermitian,liu2015hermitian,mohar2020new}.

In this paper,
we define $2s$-weighted graphs in which each vertex is assigned a $2s$-th root of unity,
and the weight of each edge is the product of the weights of its two incident vertices.
We give all eigenvalues of the $s$-blowup of a graph via its $2s$-weighted induced subgraphs (see Theorem \ref{thm1}).

\section{Preliminaries}\label{sec2}

In this section,
we introduce the definition of $2s$-weighted graphs and some notation on uniform hypergraphs.

\begin{defi}

Let $G=(V,E)$ be a graph and let $s$ be a positive integer.
A $2s$-weighted graph $G_{\pi}$ consists of the underlying graph $G$ and $\pi:V\rightarrow\{\xi\in\mathbb{C}:\xi^{2s}=1\}$.
The matrix $A(G_{\pi})=(a_{ij}^{\pi})$ is the adjacency matrix of $G_{\pi}$,
where
\begin{equation*}
a_{ij}^{\pi}=\begin{cases}
\pi(i)\pi(j),&\mbox{if $\{i,j\} \in E$},\\
0,&\mbox{otherwise}.
\end{cases}
\end{equation*}

\end{defi}

When $s=1$,
the $2$-weighted graph $G_{\pi}$ assigns to each vertex a weight of $1$ or $-1$,
which is the marked graph introduced by Harary et al. \cite{harary1977enumeration}.
A marked graph naturally corresponds to a balanced signed graph (i.e., the product of the weights of its edges in every cycle is $1$) if the weight of each edge is the product of the weights of its two incident vertices \cite{harary1981counting}.
Therefore,
$A(G_{\pi})$ is also the adjacency matrix of a balanced signed graph of $G$.

For a $2s$-weighted graph $G_{\pi}$,
the eigenvalues of $A(G_{\pi})$ are called the eigenvalues of $G_{\pi}$.
An induced subgraph of $G_{\pi}$ is called a $2s$-weighted induced subgraph of $G$.

A hypergraph is called \emph{$k$-uniform} if each of its edges contains exactly $k$ vertices.
For a $k$-uniform hypergraph $H=(V(H),E(H))$ with $n$ vertices,
its \emph{adjacency tensor} $A_{H}=(a_{i_{1}i_{2}\cdots i_{k}})$ is a $k$-order $n$-dimensional tensor \cite{cooper2012spectra},
where
\begin{equation*}
a_{i_{1}i_{2}\cdots i_{k}}=\begin{cases}
\frac{1}{(k-1)!},&\text{if $\{i_{1},i_{2},\ldots,i_{k}\}\in E(H)$},\\
0,&\text{otherwise}.
\end{cases}
\end{equation*}
If there exist $\lambda \in \mathbb{C}$ and a nonzero vector $\mathbf{x}=(x_{1},x_{2},\ldots,x_{n})^{\top}\in\mathbb{C}^{n}$ such that
\begin{align}\label{eq1}
\lambda x_{i}^{k-1}=\sum_{i_{2},\ldots,i_{k}=1}^{n}a_{i i_{2} \cdots i_{k}}x_{i_{2}}\cdots x_{i_{k}},\ i=1,\ldots,n,
\end{align}
then $\lambda$ is called an \emph{eigenvalue} of $H$ and $\mathbf{x}$ is an \emph{eigenvector} of $H$ corresponding to $\lambda$ \cite{lim2005singular,qi2005eigenvalues}.

For $U \subseteq V(H)$,
let $\mathbf{x}^{U}=\prod_{u \in U}x_{u}$.
Let $E_{v}(H)$ denote the set of hyperedges of $H$ containing the vertex $v$.
The equation \eqref{eq1} can be written as
\begin{align}\label{eq2}
\lambda x_{v}^{k-1}=\sum_{e \in E_{v}(H)}\mathbf{x}^{e\setminus\{v\}}
\end{align}
for each $v\in V(H)$.

\section{Main results}\label{sec3}

In this section,
we use $2s$-weighted graphs to give all eigenvalues of the $s$-blowup of a graph,
along with some remarks.

For a graph $G=(V,E)$ and $s\geq2$,
let $G^{[s]}$ denote the $s$-blowup of $G$ with the vertex set and hyperedge set
$$V(G^{[s]})=\bigcup_{i \in V}\mathbf{V}_{i}\ \mbox{and}\ E(G^{[s]})=\left\{\mathbf{V}_{i}\cup\mathbf{V}_{j}: \{i,j\} \in E\right\},$$
where each $\mathbf{V}_{i}$ denotes a set with $s$ elements corresponding to the vertex $i$ of $G$,
and all of those sets are pairwise disjoint.
Let $\lambda$ be an eigenvalue of $G^{[s]}$ with corresponding eigenvector $\mathbf{x}$.
By \eqref{eq2},
for each $i \in V$ and each $v \in \mathbf{V}_{i}$,
we have
\begin{align}\label{eq3}\notag
\lambda x_{v}^{2s-1}&=\sum_{j: \{i,j\} \in E}\mathbf{x}^{(\mathbf{V}_{i}\cup\mathbf{V}_{j})\setminus\{v\}}\\
&=\mathbf{x}^{\mathbf{V}_{i}\setminus\{v\}}\sum_{j: \{i,j\} \in E}\mathbf{x}^{\mathbf{V}_{j}}.
\end{align}

We now state the main result as follows.

\begin{thm}\label{thm1}

Let $G$ be a graph and let $s\geq2$.
A complex number $\lambda$ is an eigenvalue of $G^{[s]}$ if and only if
$\lambda$ is an eigenvalue of some $2s$-weighted induced subgraph of $G$.

\end{thm}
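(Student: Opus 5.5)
We prove both directions directly from the eigen-equations \eqref{eq3}, taking $I=\{i\in V:\mathbf{x}^{\mathbf{V}_i}\neq 0\}$ as the candidate induced subgraph.

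\textbf{Necessity.} Let $\lambda$ be an eigenvalue of $G^{[s]}$ with eigenvector $\mathbf{x}$, and put $y_i=\mathbf{x}^{\mathbf{V}_i}$ and $I$ as above.

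First we show that $I\neq\emptyset$ unless $\lambda=0$. If every $y_i=0$, then every block $\mathbf{V}_i$ contains a zero coordinate, so the right-hand side of \eqref{eq3} vanishes for every $v$. Choosing $v$ with $x_v\neq0$ then forces $\lambda=0$.

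The case $\lambda=0$ is handled separately. It is an eigenvalue of any single-vertex induced subgraph, whose adjacency matrix is $(0)$. Conversely, $0$ is always an eigenvalue of $G^{[s]}$: take $\mathbf{x}$ with exactly one nonzero coordinate, say at $v\in\mathbf{V}_i$. Since $s\ge2$, every product on the right of \eqref{eq3} contains a zero factor, so all equations hold with $\lambda=0$. Hence we may assume $\lambda\neq0$, so that $I\neq\emptyset$.

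For $i\in I$, all $x_v$ with $v\in\mathbf{V}_i$ are nonzero. Multiply \eqref{eq3} by $x_v$ to get
\[
\lambda x_v^{2s}=y_i\sum_{j\sim i}y_j=y_i\sum_{j\sim i,\,j\in I}y_j .
\]
The right side does not depend on $v\in\mathbf{V}_i$, so $x_v^{2s}$ is constant on $\mathbf{V}_i$. Call this common value $c_i$, so $c_i\neq0$ and $y_i^{2s}=c_i^{s}$.

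Choose $t_i$ with $t_i^2=c_i$, so that $y_i^{2s}=t_i^{2s}$. Then $y_i=\pi(i)\,t_i^{\,\cdot}$ up to a $2s$-th root of unity; precisely, write $y_i=\pi(i)\,t_i^{s}\cdot$ (correct power), where $\pi(i)^{2s}=1$. Set $z_i=\pi(i)^{-1}y_i$; its $2s$-th power equals $c_i^s$, and we want $z_i=t_i^s$.

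The bookkeeping is cleaner with a different normalisation. Let $u_i=y_i/w_i$, where $w_i$ is a chosen square root of $y_i$ raised to suitable powers, arranged so that the equation becomes linear in $z_i=\pi(i)^{-1} y_i$. Concretely, set $d_i=y_i/\sqrt{c_i}^{\,s-1}$ for a fixed branch $\sqrt{c_i}$. Then
\[
\lambda c_i=\lambda x_v^{2s}=y_i\sum_{j\in I,\,j\sim i} y_j .
\]
Dividing by $y_i$ and using $c_i=y_i^2\,\omega_i$, where $\omega_i=c_i/y_i^2$ satisfies $\omega_i^s=c_i^s/y_i^{2s}=1$, gives
\[
\lambda\,\omega_i\,y_i=\sum_{j\in I,\,j\sim i}y_j .
\]
Choose $\pi(i)$ with $\pi(i)^2=\omega_i$; this is possible and gives $\pi(i)^{2s}=1$. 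Set $z_i=\pi(i)y_i$. Then
\[
\lambda z_i=\lambda\pi(i)^{-1}\omega_i y_i=\sum_{j\in I,\,j\sim i}\pi(i)^{-1}\pi(j)^{-1}z_j .
\]
Thus $z$ is a nonzero eigenvector, with eigenvalue $\lambda$, of $A(G[I]_{\bar\pi})$, where $\bar\pi=\pi^{-1}$ still takes values in the $2s$-th roots of unity. This proves necessity.

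\textbf{Sufficiency.} Conversely, suppose $A(G[I]_\pi)z=\lambda z$ with $z\neq0$. We may assume $\lambda\neq0$, by the $\lambda=0$ case above.

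It is tempting to restrict to $\operatorname{supp} z$, but that subgraph does not obviously carry the eigenvector. Instead we keep all of $I$ and allow $y_i=0$ within $I$, choosing a block $\mathbf{V}_i$ to contain a zero coordinate. The problem is that for $i\in I$ with $z_i=0$, \eqref{eq3} at a zero coordinate $v$ demands $0=\mathbf{x}^{\mathbf{V}_i\setminus\{v\}}\sum_j y_j$, and this fails if $\mathbf{V}_i$ has only one zero while $\sum_j y_j\neq0$. Since $s\ge2$, we put at least two zeros in each such block, which kills every product $\mathbf{x}^{\mathbf{V}_i\setminus\{v\}}$. Vertices outside $I$ are zeroed in the same way.

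For $i$ with $z_i\neq0$, we reverse the computation above. Let $\omega_i=\pi(i)^{-2}$ (taking $\bar\pi=\pi^{-1}$), and set $y_i=\pi(i)^{-1}z_i$ and $c_i=\omega_i y_i^2$. Choose every $x_v$, $v\in\mathbf{V}_i$, as a $2s$-th root of $c_i$ such that their product equals $y_i$. This requires $y_i^{2s}=c_i^s$, which holds because $\omega_i^s=1$. Achieving the product is possible by adjusting a single coordinate by a $2s$-th root of unity, since all the candidate products differ by such roots.

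\textbf{Main obstacle.} The main work lies in this root-of-unity bookkeeping: checking that $y_i^{2s}=c_i^s$ yields $\omega_i^s=1$, that a square root $\pi(i)$ of $\omega_i$ can be chosen with $\pi(i)^{2s}=1$, and that the product constraint on each block can be met. In both directions these checks reduce to $\omega^s=1\Rightarrow(\sqrt\omega)^{2s}=1$.
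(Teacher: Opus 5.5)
Your necessity argument is correct and is essentially the paper's. Multiplying \eqref{eq3} by $x_v$, instead of taking the product over $\mathbf{V}_i$, gives $\lambda\omega_i y_i=\sum_{j\sim i}y_j$ with $\omega_i^s=1$; the paper's $\eta_i$ is $\omega_i^{-1}$. Defining $I$ by $y_i\neq0$ and proving directly that $0$ is an eigenvalue of $G^{[s]}$ are both fine. Delete the passage with $t_i,u_i,w_i,d_i$; for instance, $y_i=\pi(i)t_i^{s}$ does not follow from $y_i^{2s}=t_i^{2s}$.

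The sufficiency direction, however, fails as written. For a block whose entries are all nonzero, \eqref{eq3} at $v\in\mathbf{V}_i$ is equivalent, after multiplying by $x_v$, to $\lambda c_i=y_i\sum_{j\sim i}y_j$, i.e.\ to $\lambda\omega_i y_i=\sum_{j\sim i}y_j$. With your choices $\omega_i=\pi(i)^{-2}$ and $y_i=\pi(i)^{-1}z_i$, this reads $\lambda\pi(i)^{-3}z_i=\sum_{j}\pi(j)^{-1}z_j$. That is not the given equation $\lambda z_i=\pi(i)\sum_{j}\pi(j)z_j$: you have mixed the conventions for $\pi$ and $\bar\pi=\pi^{-1}$ from the first part. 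Concretely, take $G=K_2$, $s=2$, $\pi(1)=\mathbf{i}$, $\pi(2)=1$, $\lambda=\mathbf{i}$, $z=(1,1)^{\top}$. Your recipe gives $y_1=-\mathbf{i}$, $y_2=1$, $c_1=c_2=1$, and the required identity at $i=1$ becomes $\mathbf{i}=-\mathbf{i}$, whichever roots you place in the blocks. Because you never write out the verification of \eqref{eq3} (``reverse the computation above''), the slip goes unnoticed.

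The repair is to set $y_i=\pi(i)z_i$ while keeping $\omega_i=\pi(i)^{-2}$. Then $c_i=z_i^2$ and $\lambda\omega_i y_i=\lambda\pi(i)^{-1}z_i=\sum_j\pi(j)z_j=\sum_j y_j$, as required. Your root-adjustment step is then valid. The paper's vector is one admissible choice: $x_{v_i}=\pi(i)w_i$ and $x_v=w_i$ for the other $v\in\mathbf{V}_i$, where $w_i^s=z_i$.

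Two smaller points:
\begin{itemize}
\item The blocks with $z_i=0$ or $i\notin I$ must be entirely zero, not merely contain two zeros. A nonzero $x_v$ in a block that contains a zero makes the right side of \eqref{eq3} vanish while the left side does not.
\item Your worry about restricting to $\operatorname{supp} z$ is unfounded. The restriction of $z$ to its support is an eigenvector of $G[\operatorname{supp} z]_\pi$ for $\lambda$, and zeroing those blocks effectively does exactly this.
\end{itemize}
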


\begin{proof}

Let $\lambda$ be an eigenvalue of $G^{[s]}$.
It is known that a $k$-uniform hypergraph ($k\geq3$) has an eigenvalue $0$ \cite{qi2014h}.
On the other hand,
a single vertex is an induced subgraph of $G$ and has an eigenvalue $0$.
Thus, we only need to consider the case of $\lambda\neq0$.

Let $\mathbf{x}$ be an eigenvector of $G^{[s]}$ corresponding to $\lambda$.
In the case of $\lambda\neq0$,
it follows from \eqref{eq3} that the components of $\mathbf{x}$ corresponding to vertices in $\mathbf{V}_{i}$ are either all zero or all nonzero for each $i \in V$.
Then the subhypergraph of $G^{[s]}$ induced by the vertices corresponding to nonzero components of $\mathbf{x}$ is also the $s$-blowup of an induced subgraph of $G$.
We denote this induced subgraph of $G$ by $\widehat{G}=(\widehat{V},\widehat{E})$.
We will prove that there is a $2s$-weighted graph of $\widehat{G}$ with an eigenvalue $\lambda$.

For each $i \in \widehat{V}$,
it follows from \eqref{eq3} that
\begin{align*}
\prod_{v\in\mathbf{V}_{i}}\lambda x_{v}^{2s-1}
&=\prod_{v\in\mathbf{V}_{i}}\left(\mathbf{x}^{\mathbf{V}_{i}\setminus\{v\}}\sum_{j:\{i,j\}\in E}\mathbf{x}^{\mathbf{V}_{j}}\right)\\
&=\prod_{v\in\mathbf{V}_{i}}\left(\mathbf{x}^{\mathbf{V}_{i}\setminus\{v\}}\sum_{j:\{i,j\}\in\widehat{E}}\mathbf{x}^{\mathbf{V}_{j}}\right).
\end{align*}
Then we have
\begin{align*}
\lambda^{s}\left(\mathbf{x}^{\mathbf{V}_{i}}\right)^{2s-1}
=\left(\mathbf{x}^{\mathbf{V}_{i}}\right)^{s-1}\left(\sum_{j:\{i,j\}\in\widehat{E}}\mathbf{x}^{\mathbf{V}_{j}}\right)^{s},
\end{align*}
and so
$$\left(\lambda\mathbf{x}^{\mathbf{V}_{i}}\right)^{s}=\left(\sum_{j: \{i,j\}\in\widehat{E}}\mathbf{x}^{\mathbf{V}_{j}}\right)^{s}.$$
Thus, we get $\lambda \mathbf{x}^{\mathbf{V}_{i}}=\eta_{i}\sum_{j:\{i,j\}\in\widehat{E}}\mathbf{x}^{\mathbf{V}_{j}}$,
where $\eta_{i}$ is an $s$-th root of unity.
For each $i\in\widehat{V}$,
let $\xi_{i}\in\mathbb{C}$ be such that $\xi_{i}^{2}=\eta_{i}$ and note that $\xi_{i}$ is a $2s$-th root of unity.
Denote $y_{i}=\mathbf{x}^{\mathbf{V}_{i}}/\xi_{i}$.
Then we have
$$\lambda y_{i}=\sum_{j:\{i,j\}\in\widehat{E}}\xi_{i}\xi_{j}y_{j}.$$
It implies that there is a $2s$-weighted graph $\widehat{G}_{\pi}$ satisfying $\pi(i)=\xi_{i}$ for all $i \in \widehat{V}$ such that $\lambda$ is an eigenvalue of $\widehat{G}_{\pi}$.

Conversely, let $\lambda\neq0$ be an eigenvalue of some $2s$-weighted induced subgraph $\widehat{G}_{\pi}$ of $G$,
where $\widehat{G}=(\widehat{V},\widehat{E})$ is the underlying graph of $\widehat{G}_{\pi}$.
Next, we prove that $\lambda$ is an eigenvalue of $G^{[s]}$.
Let $\mathbf{y}$ be an eigenvector of $\widehat{G}_{\pi}$ corresponding to $\lambda$.
For each $i \in \widehat{V}$, we have
\begin{align}\label{eq4}
\lambda y_{i}=\sum_{j:\{i,j\}\in\widehat{E}}\pi(i)\pi(j)y_{j}.
\end{align}
Let $z_{i}\in\mathbb{C}$ be such that $z_{i}^{s}=y_{i}$.
Fix some vertex in $\mathbf{V}_{i}$ and denote it by $v_{i}$.
Let $\mathbf{x}=(x_{v})$ be an $s|V|$-dimensional vector with components
\begin{equation*}
x_{v}=\begin{cases}
\pi(i)z_{i}, &\mbox{for $v=v_{i}$ and $i \in \widehat{V}$,}\\
z_{i}, &\mbox{for $v\in\mathbf{V}_{i}\setminus\{v_{i}\}$ and $i \in \widehat{V}$,}\\
0, &\mbox{otherwise.}
\end{cases}
\end{equation*}
We then verify that $\lambda$ and $\mathbf{x}$ satisfy \eqref{eq3}.

For vertices $v \in V(G^{[s]})$ not in the $s$-blowup of the induced subgraph,
the required equation \eqref{eq3} clearly holds.
For each $i \in \widehat{V}$ and $v=v_{i}$,
from $\pi(i)^{2s}=1$ and \eqref{eq4},
we have
\begin{align*}
\lambda x_{v_{i}}^{2s-1}
=\lambda\left(\pi(i)z_{i}\right)^{2s-1}
=\frac{\lambda y_{i} z_{i}^{s-1}}{\pi(i)}
=z_{i}^{s-1}\sum_{j:\{i,j\}\in\widehat{E}}\pi(j)y_{j}.
\end{align*}
Since $\mathbf{x}^{\mathbf{V}_{i}}=\pi(i)z_{i}^{s}=\pi(i)y_{i}$ for each $i \in \widehat{V}$,
we get
\begin{align}\label{eq5}
\lambda x_{v_{i}}^{2s-1}
=\mathbf{x}^{\mathbf{V}_{i}\setminus\{v_{i}\}}\sum_{j:\{i,j\}\in\widehat{E}}\mathbf{x}^{\mathbf{V}_{j}}
=\mathbf{x}^{\mathbf{V}_{i}\setminus\{v_{i}\}}\sum_{j:\{i,j\} \in E}\mathbf{x}^{\mathbf{V}_{j}}.
\end{align}
For each $v\in\mathbf{V}_{i}\setminus\{v_{i}\}$, we similarly get
\begin{align}\label{eq6}
\lambda x_{v}^{2s-1}
=\lambda z_{i}^{2s-1}
=\pi(i)z_{i}^{s-1}\sum_{j:\{i,j\}\in\widehat{E}}\pi(j)y_{j}
=\mathbf{x}^{\mathbf{V}_{i}\setminus\{v\}}\sum_{j:\{i,j\} \in E}\mathbf{x}^{\mathbf{V}_{j}}.
\end{align}
Thus, equations \eqref{eq5} and \eqref{eq6} together verify \eqref{eq3}.
It implies that $\lambda$ is an eigenvalue of $G^{[s]}$.

\end{proof}

\noindent\textbf{Remark 1.}
Let $\widehat{G}$ be an induced subgraph of the graph $G$.
For any $2s$-weighted graph $\widehat{G}_{\pi}$,
denote $|A(\widehat{G}_{\pi})|=(|a_{ij}^{\pi}|)$
and let $\rho(\widehat{G}_{\pi})$ (resp. $\rho(\widehat{G})$) denote the spectral radius of $\widehat{G}_{\pi}$ (resp. $\widehat{G}$).
Since $|A(\widehat{G}_{\pi})| = A(\widehat{G})$, by \citep[Theorem 8.1.18]{Horn2013matrix},
we have $\rho(\widehat{G}_{\pi})\leq\rho(\widehat{G})$.
Note that $\rho(\widehat{G})\leq\rho(G)$.
By Theorem \ref{thm1},
we get that the spectral radius of $G^{[s]}$ is equal to $\rho(G)$.
It implies that our result can derive a result in \cite{Fan2015spectral} and a special case of the result in \cite{yuan2016proof}.
\vspace{4mm}

\noindent\textbf{Remark 2.}
Let $K_{3}$ denote the complete graph with $3$ vertices.
As an illustration of Theorem \ref{thm1},
we give all eigenvalues of the $2$-blowup $K_{3}^{[2]}$.
The induced subgraphs of $K_{3}$ consist of single vertices, single edges, and itself.
The single vertex and its $4$-weighted graph only have an eigenvalue $0$.
It is also easy to check that the eigenvalues of all $4$-weighted graphs of the single edge include $\pm 1$ and $\pm \mathbf{i}$,
where $\mathbf{i}=\sqrt{-1}$.
Let $V=\{1,2,3\}$ denote the vertex set of $K_{3}$.
For any $4$-weighted graph $(K_{3})_{\pi}$ with $\pi:V\rightarrow\{\pm 1, \pm \mathbf{i}\}$,
its characteristic polynomial is
$$\phi_{\pi}(\lambda)=\lambda^{3}-(a_{1}a_{2}+a_{1}a_{3}+a_{2}a_{3})\lambda-2a_{1}a_{2}a_{3},$$
where $a_{i}=\pi(i)^{2}\in\{1,-1\}$ for each $i \in V$.
We consider the following four cases:
\begin{enumerate}

\item If $a_{1}=a_{2}=a_{3}=1$,
then $\phi_{\pi}(\lambda)=\lambda^{3}-3\lambda-2$ with eigenvalues $2$, $-1$, $-1$;

\item If $a_{1}=a_{2}=a_{3}=-1$,
then $\phi_{\pi}(\lambda)=\lambda^{3}-3\lambda+2$ with eigenvalues $-2$, $1$, $1$;

\item If two of $a_{1}, a_{2}, a_{3}$ equal $1$ and the other equals $-1$,
then $\phi_{\pi}(\lambda)=\lambda^{3}+\lambda+2$ with eigenvalues $-1$, $\frac{1\pm\sqrt{7}\mathbf{i}}{2}$;

\item If two of $a_{1}, a_{2}, a_{3}$ equal $-1$ and the other equals $1$,
then $\phi_{\pi}(\lambda)=\lambda^{3}+\lambda-2$ with eigenvalues $1$, $\frac{-1\pm\sqrt{7}\mathbf{i}}{2}$.

\end{enumerate}
By Theorem \ref{thm1},
the eigenvalues of $K_{3}^{[2]}$ are thus
$$\left\{0,\pm 1, \pm \mathbf{i}, \pm 2, \frac{1\pm\sqrt{7}\mathbf{i}}{2}, \frac{-1\pm\sqrt{7}\mathbf{i}}{2}\right\}.$$
\vspace{4mm}

\noindent\textbf{Remark 3.}
A real eigenvalue of a tensor is called an $\mathrm{H}$-eigenvalue if it has a real eigenvector \cite{qi2005eigenvalues}.
%Clearly, $\mathrm{H}$-eigenvalues of a tensor are real.
Qi \cite{qi2005eigenvalues} presented that the real eigenvalues of a tensor are not necessarily $\mathrm{H}$-eigenvalues,
and called those eigenvalues without real eigenvectors $\mathrm{N}$-eigenvalues.
Using the $2$-blowup of the complete graph $K_{3}$,
we will show that there exist uniform hypergraphs having real $\mathrm{N}$-eigenvalues.

It is known from Remark 2 that $-2$ is an eigenvalue of $K_{3}^{[2]}$.
Suppose that $\mathbf{x}$ is an real eigenvector of $K_{3}^{[2]}$ corresponding to $-2$.
For each $i \in V=\{1,2,3\}$,
let $v_{i}$ and $u_{i}$ denote two vertices in $\mathbf{V}_{i}$.
By \eqref{eq3},
we have
\begin{align}\label{eq7}
-2x_{v_{i}}^{3}=x_{u_{i}}\sum_{j \in V\setminus\{i\}}x_{v_{j}}x_{u_{j}} \ \mbox{and}\ -2x_{u_{i}}^{3}=x_{v_{i}}\sum_{j \in V\setminus\{i\}}x_{v_{j}}x_{u_{j}}.
\end{align}
Note that $x_{v_{i}}$ and $x_{u_{i}}$ are either both zero or both nonzero for each $i \in V$.
Then the subhypergraph induced by the vertices corresponding to the nonzero components of $\mathbf{x}$ is the $2$-blowup of some induced subgraph of $K_{3}$.
It also implies that $-2$ is an eigenvalue of the $2$-blowup of this induced subgraph.
It is easy to check that the $2$-blowup of a single vertex and that of a single edge have no eigenvalue $-2$. 
Therefore, the subhypergraph induced by the vertices corresponding to the nonzero components of $\mathbf{x}$ is $K_{3}^{[2]}$,
that is, all components of $\mathbf{x}$ are nonzero.

For each $i \in V$,
dividing the two equations in \eqref{eq7},
we have $x_{v_{i}}^{4}=x_{u_{i}}^{4}$,
and so $x_{v_{i}}=\eta_{i}x_{u_{i}}$,
where $\eta_{i}=1$ or $-1$.
It follows that $-2x_{v_{i}}^{2}/\eta_{i}=\sum_{j \in V\setminus\{i\}}(x_{v_{j}}^{2}/\eta_{j})$ for all $i \in V$.
Denote $y_{i}=x_{v_{i}}^{2}/\eta_{i}$.
We can derive the following system of linear equations
\[
\begin{cases}
2y_1 + y_2 + y_3 = 0, \\
y_1 + 2y_2 + y_3 = 0, \\
y_1 + y_2 + 2y_3 = 0.
\end{cases}
\]
Since the determinant of the coefficient matrix of this system is not equal to $0$,
it has only the trivial solution $y_{1}=y_{2}=y_{3}=0$.
It implies that $\mathbf{x}$ is the zero vector,
which leads to a contradiction.
Thus, there does not exist a real eigenvector of $K_{3}^{[2]}$ corresponding to $-2$,
that is, $-2$ is an $\mathrm{N}$-eigenvalue of $K_{3}^{[2]}$.

\vspace{7mm}
\noindent\textbf{Acknowledgements}
This work is supported by the National Natural Science Foundation of China (No. 12371344),
the Natural Science Foundation for The Excellent Youth Scholars of the Heilongjiang Province (No. YQ2024A009),
and the Fundamental Research Funds for the Central Universities.

\bibliographystyle{plain}
\bibliography{sgraph3}

\begin{thebibliography}{10}

\bibitem{bapat2012weighted}
R.~Bapat, D.~Kalita, and S.~Pati.
\newblock On weighted directed graphs.
\newblock {\em Linear Algebra and its Applications}, 436(1):99--111, 2012.

\bibitem{bu2026on}
C.~Bu, L.~Chen, and Y.~Shi.
\newblock On the second-largest modulus among the eigenvalues of a power
  hypergraph.
\newblock {\em Advances in Applied Mathematics}, 176:103052, 2026.

\bibitem{cameron2014universal}
S.~Cameron, S.~Fehrenbach, L.~Granger, O.~Hennigh, S.~Shrestha, and C.~Tamon.
\newblock Universal state transfer on graphs.
\newblock {\em Linear Algebra and its Applications}, 455:115--142, 2014.

\bibitem{cavaleri2021group}
M.~Cavaleri, D.~D'Angeli, and A.~Donno.
\newblock A group representation approach to balance of gain graphs.
\newblock {\em Journal of Algebraic Combinatorics}, 54(1):265--293, 2021.

\bibitem{chen2023all}
L.~Chen, E.~{van Dam}, and C.~Bu.
\newblock All eigenvalues of the power hypergraph and signed subgraphs of a
  graph.
\newblock {\em Linear Algebra and its Applications}, 676:205--210, 2023.

\bibitem{chen2024spectra}
L.~Chen, E.~{van Dam}, and C.~Bu.
\newblock Spectra of power hypergraphs and signed graphs via parity-closed
  walks.
\newblock {\em Journal of Combinatorial Theory, Series A}, 207:105909, 2024.

\bibitem{clark2018hypertrees}
G.~Clark and J.~Cooper.
\newblock On the adjacency spectra of hypertrees.
\newblock {\em The Electronic Journal of Combinatorics}, 25(2), 2018.
\newblock Article ID \#P2.48.

\bibitem{clark2021harary}
G.~Clark and J.~Cooper.
\newblock A {Harary-Sachs} theorem for hypergraphs.
\newblock {\em Journal of Combinatorial Theory, Series B}, 149:1--15, 2021.

\bibitem{cooper2012spectra}
J.~Cooper and A.~Dutle.
\newblock Spectra of uniform hypergraphs.
\newblock {\em Linear Algebra and its Applications}, 436(9):3268--3292, 2012.

\bibitem{duan2023characteristic}
C.~Duan, E.~{van Dam}, and L.~Wang.
\newblock The characteristic polynomials of uniform double hyperstars and
  uniform hypertriangles.
\newblock {\em Linear Algebra and its Applications}, 678:16--32, 2023.

\bibitem{facchetti2011computing}
G.~Facchetti, G.~Iacono, and C.~Altafini.
\newblock Computing global structural balance in large-scale signed social
  networks.
\newblock {\em Proceedings of the National Academy of Sciences of the United
  States of America}, 108(52):20953--20958, 2011.

\bibitem{fan2019spectral}
Y.~Fan, T.~Huang, Y.~Bao, C.~Zhuan-Sun, and Y.~Li.
\newblock The spectral symmetry of weakly irreducible nonnegative tensors and
  connected hypergraphs.
\newblock {\em Transactions of the American Mathematical Society},
  372(3):2213--2233, 2019.

\bibitem{gao2022spectral}
G.~Gao, A.~Chang, and Y.~Hou.
\newblock Spectral radius on linear $r$-graphs without expanded ${K}_{r+1}$.
\newblock {\em SIAM Journal on Discrete Mathematics}, 36(2):1000--1011, 2022.

\bibitem{guo2017hermitian}
K.~Guo and B.~Mohar.
\newblock Hermitian adjacency matrix of digraphs and mixed graphs.
\newblock {\em Journal of Graph Theory}, 85(1):217--248, 2017.

\bibitem{harary1981counting}
F.~Harary and J.~Kabell.
\newblock Counting balanced signed graphs using marked graphs.
\newblock {\em Proceedings of the Edinburgh Mathematical Society},
  24(2):99--104, 1981.

\bibitem{harary1977enumeration}
F.~Harary, E.~Palmer, R.~Robinson, and A.~Schwenk.
\newblock Enumeration of graphs with signed points and lines.
\newblock {\em Journal of Graph Theory}, 1(4):295--308, 1977.

\bibitem{Horn2013matrix}
R.~Horn and C.~Johnson.
\newblock {\em Matrix {A}nalysis}.
\newblock Cambridge University Press, New York, 2013.

\bibitem{Fan2015spectral}
M.~Khan and Y.~Fan.
\newblock On the spectral radius of a class of non-odd-bipartite even uniform
  hypergraphs.
\newblock {\em Linear Algebra and its Applications}, 480:93--106, 2015.

\bibitem{khan2016h}
M.~Khan, Y.~Fan, and Y.~Tan.
\newblock The {H}-spectra of a class of generalized power hypergraphs.
\newblock {\em Discrete Mathematics}, 339(6):1682--1689, 2016.

\bibitem{li2024hypertree}
H.~Li, L.~Su, and S.~Fallat.
\newblock On a relationship between the characteristic and matching polynomials
  of a uniform hypertree.
\newblock {\em Discrete Mathematics}, 347(5):113915, 2024.

\bibitem{lim2005singular}
L.~Lim.
\newblock Singular values and eigenvalues of tensors: a variational approach.
\newblock In {\em 1st IEEE International Workshop on Computational Advances in
  Multi-Sensor Adaptive Processing}, pages 129--132. IEEE, 2005.

\bibitem{lin2025largest}
H.~Lin, L.~Zheng, and B.~Zhou.
\newblock Largest and least {H}-eigenvalues of symmetric tensors and
  hypergraphs.
\newblock {\em Linear and Multilinear Algebra}, 73(12):2821--2847, 2025.

\bibitem{liu2015hermitian}
J.~Liu and X.~Li.
\newblock Hermitian-adjacency matrices and {H}ermitian energies of mixed
  graphs.
\newblock {\em Linear Algebra and its Applications}, 466:182--207, 2015.

\bibitem{marcus2015interlacing}
A.~Marcus, D.~Spielman, and N.~Srivastava.
\newblock Interlacing families {I}: bipartite {R}amanujan graphs of all
  degrees.
\newblock {\em Annals of Mathematics}, 182(1):307--325, 2015.

\bibitem{mehatari2022adjacency}
R.~Mehatari, M.~Kannan, and A.~Samanta.
\newblock On the adjacency matrix of a complex unit gain graph.
\newblock {\em Linear and Multilinear Algebra}, 70(9):1798--1813, 2022.

\bibitem{mohar2020new}
B.~Mohar.
\newblock A new kind of {H}ermitian matrices for digraphs.
\newblock {\em Linear Algebra and its Applications}, 584:343--352, 2020.

\bibitem{qi2005eigenvalues}
L.~Qi.
\newblock Eigenvalues of a real supersymmetric tensor.
\newblock {\em Journal of Symbolic Computation}, 40(6):1302--1324, 2005.

\bibitem{qi2014h}
L.~Qi.
\newblock {H}$^+$-eigenvalues of {L}aplacian and signless {L}aplacian tensors.
\newblock {\em Communications In Mathematical Sciences}, 12(6):1045--1064,
  2014.

\bibitem{reff2012spectral}
N.~Reff.
\newblock Spectral properties of complex unit gain graphs.
\newblock {\em Linear algebra and its applications}, 436(9):3165--3176, 2012.

\bibitem{yuan2016proof}
X.~Yuan, L.~Qi, and J.~Shao.
\newblock The proof of a conjecture on largest {L}aplacian and signless
  {L}aplacian {H}-eigenvalues of uniform hypergraphs.
\newblock {\em Linear Algebra and its Applications}, 490:18--30, 2016.

\bibitem{zhang2017spectra}
W.~Zhang, L.~Kang, E.~Shan, and Y.~Bai.
\newblock The spectra of uniform hypertrees.
\newblock {\em Linear Algebra and its Applications}, 533:84--94, 2017.

\bibitem{zheng2020simplifying}
Y.~Zheng.
\newblock Simplifying the computation of the spectrum of the complete
  $k$-uniform hypergraph.
\newblock {\em Linear Algebra and its Applications}, 595:145--156, 2020.

\bibitem{zheng2021complete}
Y.~Zheng.
\newblock The characteristic polynomial of the complete 3-uniform hypergraph.
\newblock {\em Linear Algebra and its Applications}, 627:275--286, 2021.

\bibitem{zhou2014some}
J.~Zhou, L.~Sun, W.~Wang, and C.~Bu.
\newblock Some spectral properties of uniform hypergraphs.
\newblock {\em The Electronic Journal of Combinatorics}, 21(4), 2014.
\newblock Article ID \#P4.24.

\end{thebibliography}

\end{document}